\newcommand{\shrinkmargins}[1]{
  \addtolength{\textheight}{#1\topmargin}
  \addtolength{\textheight}{#1\topmargin}
  \addtolength{\textwidth}{#1\oddsidemargin}
  \addtolength{\textwidth}{#1\evensidemargin}
  \addtolength{\topmargin}{-#1\topmargin}
  \addtolength{\oddsidemargin}{-#1\oddsidemargin}
  \addtolength{\evensidemargin}{-#1\evensidemargin}
  }
\newtheorem{theorem}{Theorem}
\newtheorem{lemma}[theorem]{Lemma}
\newtheorem{corollary}[theorem]{Corollary}
\newtheorem*{theorem*}{Theorem}
\theoremstyle{definition}
\newtheorem{definition}[theorem]{Definition}
\theoremstyle{remark}
\newtheorem*{remark}{Remark}
\numberwithin{theorem}{section} \numberwithin{equation}{section}
\def\func#1{\mathop{\rm #1}}%
\newcommand{\floor}[1]{\left\lfloor #1 \right\rfloor}
\begin{document}
\title[Transfer]{Zeros Transfer For Recursively defined Polynomials}
\author{Bernhard Heim }
\address{Faculty of Mathematical and Natural Sciences, Mathematical Institute, University of Cologne, Weyertal 86--90, 50931 Cologne, Germany}
\email{bheim@uni-koeln.de}
\address{Lehrstuhl A f\"{u}r Mathematik, RWTH Aachen University, 52056 Aachen, Germany}
\email{bernhard.heim@rwth-aachen.de}
\author{Markus Neuhauser}
\address{Kutaisi International University, 5/7, Youth Avenue,  Kutaisi, 4600 Georgia}
\address{Lehrstuhl A f\"{u}r Mathematik, RWTH Aachen University, 52056 Aachen, Germany}
\email{markus.neuhauser@kiu.edu.ge}
\author{Robert Tr\"{o}ger}
\email{robert@silva-troeger.de}
\subjclass[2020] {Primary 39A06, 11F20; Secondary 11B83, 26C10}
\keywords{Dedekind eta function, difference equation, partition numbers, polynomials, zeros.}
\begin{abstract}
The zeros of D'Arcais polynomials, also known as Nekrasov--Okounkov polynomials, 
dictate the vanishing of the Fourier coefficients of powers of the Dedekind functions.
These polynomials satisfy difference equations of hereditary type with non-constant coefficients.
We relate the D'Arcais polynomials to polynomials satisying a Volterra difference equation of convolution type. We obtain results on the transfer of the location of the zeros.

As an application, we obtain an identity between Chebyshev
polynomials of the second kind and $1$-associated Laguerre polynomials.
We obtain a new version of the Lehmer conjecture and bounds for the zeros of the Hermite polynomials.
\end{abstract}
\maketitle
\newpage
\section{Introduction}
Powers of the Dedekind $\eta$-function $\eta \left(
\omega \right) ^{r}$ and properties of the 
Fourier coefficients, especially their non-vanishing, play an important role in
the theory of numbers \cite{Se85,On03,BCO22}, combinatorics \cite{An99,Ha10}, and physics \cite{NO06,DDMP05,DD22}. 
Let $q:=\textrm{e}^{2 \pi \textrm{i} \omega}$, where $\omega$ is in the complex upper half-plane. Let $r \in \mathbb{N}$:
\begin{equation*}
\sum_{n=0}^{\infty} a_n(r) \, q^n =q^{-\frac{r}{24}}\,\,
\eta \left( \omega 
\right) ^{r}=  \prod_{n=1}^{\infty} \left( 1 - q^n\right)^{r}.
\end{equation*}
Let $r$ be even. Then Serre \cite{Se85} proved that $\{a_n(r)\}_n$ is lacunary if and only if 
$$r \in S:= \{2,4,6,8,10,14,26\}.$$
It is conjectured by Lehmer \cite{Le47, On08,BCO22}
that $a_n(24)$
never vanishes.
Numerical experiments \cite{HNW18} suggest that $a_n(r) \neq 0$ for all $n,r \in \mathbb{N}$
with $r$ even and $r
\not\in S$, generalizing the Lehmer conjecture.

It is known that the coefficients $a_n(r)$, are special values of polynomials 
$P_n(z)$ at $z=-r$ of degree $n$, 
the D'Arcais polynomials \cite{DA13, Ne55, HN20a},
also known as 
the Nekrasov--Okounkov polynomials \cite{Ha10, Zh22}
in combinatorics. We have
\begin{equation*}
\sum_{n=0}^{\infty} P_n(z) \, q^n = \prod_{n=1}^{\infty} \left( 1 - q^n\right)^{-z} = \exp \left( z\sum_{n=1}^{\infty} \sigma_1(n) \,\frac{q^n}{n}\right), \quad
z \in \mathbb{C}
,
\end{equation*}
where $\sigma_d(n):= \sum_{\ell \mid n} \ell^d$. 
Also,
$P_n(z)$ is integer-valued and
$n! \, P_n(z)$ is monic of degree $n$ with non-negative integer coefficients and therefore,
zeros are algebraic integers. 
The polynomials can also be defined
recursively \cite{Ne55, HNT20}, 
which enables us
to study them
using methods from difference equations:
\begin{equation}\label{def:general}
P_n(z)= \frac{z}{n} \sum_{k=1}^n \sigma_1(k) \,P_{n-k}(z), \quad
n \geq 1
,
\end{equation}
with initial value $P_0(z)=1$. This generalizes the well known
recurrence relation for
partition numbers $p(n)$. Since the times of Euler, it is known that
\begin{equation*}\label{Euler}
n \, p(n) = \sum_{k=1}^n \sigma_1(k) \, p(n-k).
\end{equation*}

Serre's results \cite{Se85} imply that for each $z \in S$,
there are infinitely many $n$, such that $P_n(-z)=0$. 
It would be interesting to devise
a combinatorial proof utilizing (\ref{def:general}).
Consider $z$ as a parameter and (\ref{def:general}) as a difference equation. We refer to Elaydi's excellent introduction to difference equations
\cite{El05}. The equation (\ref{def:general}) has non-constant coefficients and is of hereditary type.

Poincar\'{e} and Perron (\cite{El05},
section 8.2) offered a method to solve difference equations of fixed order with non-constant coefficients,
to obtain the asymptotic behavior of the solutions. On the other hand, for some difference equations of hereditary type with constant coefficients,
called Volterra difference equations of convolution type, one can utilize the discrete Laplace transform, also
called Z-transform (\cite{El05}, section 6.3). 

In this paper,
we develop a new method to study the solutions, and especially the zero
distribution of (\ref{def:general}). We
associate
$\{P_n(z)\}_n$ with another family of polynomials 
$\{Q_n(z)\}$. These polynomials satisfy a Volterra difference equation. Then we transfer properties of $Q_n(z)$
to the D'Arcais polynomials $P_n(z)$. In this paper,
we
provide evidence that $Q_n(z)$ is easier to study,
rather than $P_n(z)$.
\begin{definition} \label{def}
Let $g$ be a normalized arithmetic function with non-negative values and
let $h$ be a normalized arithmetic function with positive values.
Then,
\begin{equation}\label{recursion}
P_n^{g,h}(z) := \frac{z}{h(n)} \, \sum_{k=1}^n g(k) \, P_{n-k}^{g,h}(z), \qquad
n \geq 1
,
\end{equation}
with initial value $P_0^{g,h}(z)=1$.
\end{definition}
Examples of arithmetic functions are provided by $g\left( n\right) =\sigma_d(n), \psi_d(n)=n^d$, 
$h_s(n)=n^s$ for $s \in [0,1]$ and $\func{id}(n)=n$. Note,
that $P_n(z)= P_n^{\sigma_1, h_{1}}\left( z\right) $. To simplify notation and to highlight the special roles of $s=0$ and $s=1$,
we put $P_n^g(z):= P_n^{g,h_1}(z)$ and 
$Q_{n}^{g}(z):= P_n^{g,h_0}(z)$. 

Although, we
address the general case in this paper, we begin to illustrate our results
with the following explicit example related to orthogonal polynomials \cite{Sz75,Ch11,Do16} related to
$g(n)= \func{id}(n)$.

\subsection{Zero Transfer from Chebyshev to Laguerre
Polynomials}
We describe how properties of Chebyshev polynomials, provided by
$h(n)=h_0(n)=1$, give obstructions for the zero distribution of 
associated Laguerre polynomials $L_m^{(1)}(z)$. Orthogonal 
polynomials have real zeros, which are interlacing and well-studied. 
To quote Rahmann--Schmeisser (\cite{RS02}, introduction, page 24):
``The Chebyshev polynomials are the only classical orthogonal polynomials 
whose zeros can be determined in explicit form".
This makes our task
even more appealing, since we
claim that these explicit values can be utilized to study the zeros of Laguerre polynomials.

Let $L_m^{(\alpha)}(z)$ denote the $m$th $
\alpha
$-associated Laguerre polynomial and $U_m(z)$ represent the
$m$th Chebyshev polynomial of second kind. Then we have (\cite{HLN19}, lemma 3.3 and \cite{HNT20}, remark 2.8):
\begin{eqnarray*}
P_n^{\func{id}}(z) & = & \frac{z}{n} \, L_{n-1}^{(1)} (-z),\\
Q_n^{\func{id}}(z) & = & z \, U_{n-1}\left(\frac{z}{2}+1\right).
\end{eqnarray*}
We have the following result. 
\begin{theorem}
Let $n \geq 2$. Let $\alpha_n$ and $\beta_n$ be the smallest and
the largest zeros of 
$Q_n^{\func{id}}(z)/ \, z= U_{n-1}\left(z/2+1\right)$, where
\[
\alpha_n = 2 \, \cos \left( \frac{n-1}{n} \, \pi \right) -2,
\qquad
\beta_n= 2 \, \cos \left( \frac{1}{n} \, \pi \right) -2 .
\]
Then the zeros
of $ n \, P_n^{\func{id}}(z)/\, z= L_{n-1}^{(1)}(-z)$ are contained in
the interval
$$\left[ \alpha_n \, (n-1); \beta_n \right].$$
\end{theorem}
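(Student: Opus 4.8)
The plan is to compare the real zeros of $p_n(z):=n\,P_n^{\func{id}}(z)/z=L_{n-1}^{(1)}(-z)$ with those of $q_n(z):=Q_n^{\func{id}}(z)/z=U_{n-1}(z/2+1)$, whose zeros $2\cos(k\pi/n)-2$ ($k=1,\dots,n-1$) are explicit, all lie in $(-4,0)$, with smallest member $\alpha_n$ and largest $\beta_n$. Since the zeros of $L_{n-1}^{(1)}$ are positive, those of $p_n$ are negative, so the claim is equivalent to: $p_n(z)\neq 0$ for $z\in(\beta_n,0)$ and for $z<\alpha_n(n-1)$. I would use that $\beta_k$ increases and $\alpha_k(k-1)$ decreases strictly in $k$, together with the convolution identity
\[
n\,P_n^{g}(z)=Q_n^{g}(z)-\sum_{k=2}^{n-1}(k-1)\,P_k^{g}(z)\,Q_{n-k}^{g}(z)\qquad(n\ge 2),
\]
valid for every admissible $g$: writing $F(q)=\sum_{m\ge 0}P_m^{g}(z)q^m$, $G(q)=\sum_{m\ge 0}Q_m^{g}(z)q^m$, $\widetilde g(q)=\sum_{k\ge 1}g(k)q^k$, the two recursions give $qF'=z\widetilde g\,F$ and $G=(1-z\widetilde g)^{-1}$, hence $z\widetilde g=1-G^{-1}$ and $G\,(F-qF')=F$, and reading off the coefficient of $q^n$ yields $P_n^{g}=\sum_{j=0}^{n}(1-n+j)Q_j^{g}P_{n-j}^{g}$, which rearranges to the identity.

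For $p_n\neq 0$ on $(\beta_n,0)$ I would induct on $n$. There $U_{m-1}(z/2+1)>0$ for every $m\le n$ (we are to the right of its largest zero $\beta_m\le\beta_n$ and $U_{m-1}(1)=m>0$), so $Q_m^{\func{id}}(z)<0$; and for $k<n$ the inductive hypothesis places every zero of $p_k$ at $\le\beta_k\le\beta_n$, so $p_k>0$ on $(\beta_n,0)$ (as $p_k(0)=k>0$), giving $P_k^{\func{id}}(z)=\tfrac zk p_k(z)<0$ on $(\beta_n,0)$. Substituting these signs, $Q_n^{\func{id}}(z)<0$ while every summand $(k-1)P_k^{\func{id}}(z)Q_{n-k}^{\func{id}}(z)>0$, whence $n\,P_n^{\func{id}}(z)<0$ on $(\beta_n,0)$; the base case $n=2$ is just $2P_2^{\func{id}}(z)=Q_2^{\func{id}}(z)$.

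For $p_n\neq 0$ on the ray $R:=(-\infty,\alpha_n(n-1))$ I would run the parallel induction. By monotonicity of $\alpha_k(k-1)$ and of the smallest zero of $U_{m-1}(z/2+1)$, the ray $R$ avoids every zero of $P_k^{\func{id}}$ ($k<n$) and of $Q_m^{\func{id}}$ ($m\le n$), so from the leading coefficients $\sgn P_k^{\func{id}}(z)=(-1)^k$ and $\sgn Q_m^{\func{id}}(z)=(-1)^m$ for $z\in R$. The identity then becomes
\[
n\,P_n^{\func{id}}(z)=(-1)^n\Bigl(\,|Q_n^{\func{id}}(z)|-\sum_{k=2}^{n-1}(k-1)\,|P_k^{\func{id}}(z)|\,|Q_{n-k}^{\func{id}}(z)|\,\Bigr),
\]
so everything reduces to showing that the bracket is positive for all $z\le\alpha_n(n-1)$.

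As $z\to-\infty$ the ratio of the subtracted sum to $|Q_n^{\func{id}}(z)|$ tends to $\sum_{k=2}^{n-1}\tfrac{k-1}{k!}=1-\tfrac1{(n-1)!}<1$ (the sum telescopes), so the bracket is positive near $-\infty$; the content of the bound is that the dilation by $n-1$ in the endpoint is precisely what keeps it positive down to $z=\alpha_n(n-1)$, since there $|z|\ge(n-1)\bigl(2+2\cos(\pi/n)\bigr)$ — that is, $n-1$ times the largest zero modulus of $q_n$ — while the inductive hypothesis bounds every zero of $L_{k-1}^{(1)}$ by $(k-1)\bigl(2+2\cos(\pi/k)\bigr)\le(n-1)\bigl(2+2\cos(\pi/n)\bigr)$. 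I expect this last polynomial inequality in $|z|$, forcing the gain from $|z|$ to outweigh the telescoped mass $1-\tfrac1{(n-1)!}$ together with the lower-order corrections, to be the main obstacle. Should a clean factorwise comparison fail, an alternative route — not using the transfer — is to bound the largest zero of $L_{n-1}^{(1)}$ by $4(n-2)$ via a Gershgorin estimate for the Jacobi matrix of its three-term recurrence (diagonal $2,4,\dots,2(n-1)$, off-diagonal $\sqrt{k(k+1)}$, where $\sqrt{(m-2)(m-1)}+\sqrt{(m-1)m}<2(m-1)$ sharpens the crude $\|\cdot\|_\infty<4(n-1)$) and then check $4(n-2)\le(n-1)\bigl(2+2\cos(\pi/n)\bigr)$, i.e.\ $\cos(\pi/n)\ge(n-3)/(n-1)$, for $n\ge5$, treating $n\le4$ by hand.
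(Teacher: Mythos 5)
Your identity is correct --- it is exactly the paper's fundamental transfer lemma (Lemma \ref{uebertragung}) specialized to $x=y$ and $h=\func{id}$ --- and your argument for the right endpoint (no zeros of $P_n^{\func{id}}$ in $(\beta_n,0)$) is sound and matches the paper's Theorem \ref{rechts}. The gap is in the left endpoint, and it comes precisely from the specialization $x=y$. The paper keeps the two evaluation points distinct: in Theorem \ref{links} one compares $P_n^{g,h}(y)$ with $Q_n^{g}(x)$ for $y\leq x\,H(n-1)$, so that every coefficient $1-\frac{x\,h(k)}{y}=1-\frac{k}{n-1}$ (for $h=\func{id}$, $y=x(n-1)$) is \emph{nonnegative} for $1\leq k\leq n-1$. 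Then each summand in
\[
(-1)^{n}\Bigl(\tfrac{xh(n)}{y}P_{n}^{g,h}(y)-Q_{n}^{g}(x)\Bigr)=\sum_{k=1}^{n-1}\Bigl(1-\tfrac{xh(k)}{y}\Bigr)(-1)^{k}P_{k}^{g,h}(y)\,(-1)^{n-k}Q_{n-k}^{g}(x)
\]
is a product of nonnegative factors by induction, and one reads off $(-1)^{n}P_{n}^{g,h}(y)>0$ with no estimate whatsoever: the dilation by $n-1$ is not something to be ``paid for'' by a quantitative inequality, it is what makes all terms carry the same sign. By setting $x=y$ you turn these coefficients into $1-k\leq -1$, which flips the sign of the sum relative to $Q_n$, and you are then forced to prove that $|Q_n(z)|$ dominates $\sum_{k\geq 2}(k-1)|P_k(z)||Q_{n-k}(z)|$ on all of $(-\infty,\alpha_n(n-1)]$. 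You correctly identify this as the crux and do not prove it; the limit computation as $z\to-\infty$ (the telescoped $1-\frac{1}{(n-1)!}$) only gives positivity near $-\infty$ and says nothing at the boundary point $\alpha_n(n-1)$, which is where the inequality is tight ($n=2$ gives equality).

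Your fallback via Gershgorin for the Jacobi matrix of $L_{n-1}^{(1)}$ is a genuinely different, transfer-free route, but as sketched it also has a hole: the claimed bound $4(n-2)$ on the largest zero does not follow from row sums for small $n$ (for $n=4$ the last row sums to $6+\sqrt{6}\approx 8.45>8$), and more importantly the target inequality is $x_{\max}\leq (n-1)(2+2\cos(\pi/n))$, which behaves like $4n-\pi^{2}/2+O(1/n)$, so the crude $4(n-2)=4n-8$ is far from automatic as a stepping stone. The fix for your main argument is simply to not specialize: prove the two-variable identity (which you essentially already derived --- it is the coefficient identity $P_{n}^{g,h}(y)=\sum_{k=0}^{n}(1-\frac{xh(k)}{y})P_{k}^{g,h}(y)Q_{n-k}^{g}(x)$ obtained from $\sum P_{n}^{g,h}(y)q^{n}=\bigl(\sum(1-\frac{xh(n)}{y})P_{n}^{g,h}(y)q^{n}\bigr)\bigl(\sum Q_{m}^{g}(x)q^{m}\bigr)$) and run your induction with $x<\alpha_n$ and $y\leq x(n-1)$.
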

\begin{corollary}
Let $
m \geq 2$. Then the
zeros of $L_m^{(1)}(z)$ are contained in the interval
\begin{equation*}
\left[
2-2 \, \cos \left( \frac{\pi}{m+1} \right)
\, ;
\left( 2-2 \, \cos \left( \frac{m \, \pi}{m+1} \right)
\right) \, m\right].
\end{equation*}
\end{corollary}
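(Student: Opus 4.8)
The plan is to read this off directly from the preceding Theorem via the index shift $n = m+1$ together with the reflection $z \mapsto -z$; I expect no genuine obstacle here, only some sign bookkeeping.

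First I would set $n := m+1$, which satisfies $n \ge 2$ since $m \ge 2$. The discussion preceding the Theorem identifies $n\,P_n^{\func{id}}(z)/z$ with $L_{n-1}^{(1)}(-z) = L_m^{(1)}(-z)$, so the Theorem asserts that every zero $z_0$ of the polynomial $z \mapsto L_m^{(1)}(-z)$ lies in the interval $[\alpha_{m+1}\,m\,;\,\beta_{m+1}]$, where, by the explicit formulas in the Theorem,
\[
\alpha_{m+1} = 2\cos\!\Big(\tfrac{m}{m+1}\,\pi\Big) - 2, \qquad \beta_{m+1} = 2\cos\!\Big(\tfrac{1}{m+1}\,\pi\Big) - 2 .
\]
Since a complex number $w$ is a zero of $L_m^{(1)}$ if and only if $-w$ is a zero of $z \mapsto L_m^{(1)}(-z)$, I would then apply the bound to $z_0 = -w$, obtaining $\alpha_{m+1}\,m \le -w \le \beta_{m+1}$, that is,
\[
-\beta_{m+1} \;\le\; w \;\le\; -\alpha_{m+1}\,m .
\]

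It remains only to rewrite the endpoints. Substituting the formulas above gives $-\beta_{m+1} = 2 - 2\cos\!\big(\tfrac{\pi}{m+1}\big)$ and $-\alpha_{m+1}\,m = \big(2 - 2\cos\!\big(\tfrac{m\pi}{m+1}\big)\big)\,m$, which is precisely the interval in the statement. I would close by remarking that both endpoints are positive for $m \ge 1$, consistent with the classical fact that the zeros of the associated Laguerre polynomial $L_m^{(1)}$ are real and positive — a fact that is in any case already forced by the Theorem, which places the zeros of $z \mapsto L_m^{(1)}(-z)$ on the negative real axis. The only things requiring care are the direction of the inequalities under $z \mapsto -z$ and the correct instantiation of $\alpha_n,\beta_n$ at $n = m+1$.
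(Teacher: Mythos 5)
Your derivation is correct and is exactly the intended route: the paper gives no separate proof of the corollary, which is meant to follow from the preceding theorem by the substitution $n=m+1$ and the reflection $z\mapsto -z$ coming from the identity $n\,P_n^{\operatorname{id}}(z)/z=L_{n-1}^{(1)}(-z)$. Your sign bookkeeping and the instantiation of $\alpha_{m+1}$, $\beta_{m+1}$ both check out.
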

Let $Q_n^g(x)$ be given. Then we denote by $\alpha_n$ and $\beta_n$ the smallest and the largest real zeros. 
Let $P_n^g(x)$ be given. 
Then we denote by $\tilde{\alpha }_{n}$ and
$\tilde{\beta }_{n}$ the smallest and the largest real zeros. 
\begin{table}[H]
\[
\begin{array}{rrrr|r}
\hline
n&\alpha_{n}& \beta_{n}&\frac{\tilde{\alpha}_{n}}{(n-1) \alpha_{n}}&\frac{\tilde{\beta}_{n}}
{\left( n-1\right) \beta_{n}}\\ \hline \hline
2 & -2.0000 & -2.0000 & 1.0000 & 1.0000 \\
3 & -3.0000 & -1.0000 & 0.7887 & 0.6340 \\
4 & -3.4142 & -0.5858 & 0.7575 & 0.5325 \\
5 & -3.6180 & -0.3820 & 0.7569 & 0.4865 \\
6 & -3.7321 & -0.2679 & 0.7642 & 0.4606 \\
7 & -3.8019 & -0.1981 & 0.7736 & 0.4440 \\
8 & -3.8478 & -0.1522 & 0.7831 & 0.4326 \\
9 & -3.8794 & -0.1206 & 0.7922 & 0.4243 \\
10 & -3.9021 & -0.0979 & 0.8007 & 0.4179 \\
20 & -3.9754 & -0.0246 & 0.8559 & 0.3926 \\
100 & -3.9990 & -0.0010 & 0.9422 & 0.3757 \\
120 & -3.9993 & -0.0007 & 0.9483 & 0.3751 \\
200 & -3.9998 & -0.0002 & 0.9622 & 0.3738 \\ \hline
\end{array}
\]
\caption{\label{alphabeta}
Approximative values for the smallest $\alpha_n$  and the largest zeros $\beta_n$ of
$Q_n^{\func{id}}(z)/z$, compared to the smallest 
$\tilde{\alpha}_n$ and the largest zeros $\tilde{\beta}_n$ of
$P_n^{
\func{id}}(z)/z$.}
\end{table}

Theorem \ref{links} implies that $\frac{\tilde{\alpha }_{n}}{(n-1) \, \alpha _{n}} <1$
for $g(n)=n$ as
illustrated by Table \ref{alphabeta}. We expect that
$$ \lim_{n \rightarrow \infty} \frac{\tilde{\alpha }_{n}}{(n-1) \, \alpha _{n}} =1.$$
Szeg\H{o} (\cite{Sz75}, (6.32.6)) offers for  $\tilde{\alpha }_{n}$ the
approximation $\gamma_n$ given by
\begin{equation*}
\gamma_n = 
\left[ \sqrt{4n+4}-6^{-1/2}\left( 4n+4\right) ^{-1/6}i_{1}\right] ^{2},
\end{equation*}
where $i_1$ denotes the smallest positive zero of Airy's function $A(x)$
(we also refer
to Table \ref{Sz}). 
\begin{table}
\[
\begin{array}{rrrr}
\hline
n&\tilde{\alpha }_{n}&\left( n-1\right) \alpha _{n}&
\gamma_n 
\\ \hline \hline
2&-2.000000&-2.000000&-5.007008\\
3&-4.732051&-6.000000&-8.014282\\
4&-7.758770&-10.242641&-11.194149\\
5&-10.953894&-14.472136&-14.488083\\
6&-14.260103&-18.660254&-17.863804\\
7&-17.645964&-22.811626&-21.301454\\
8&-21.092177&-26.934313&-24.787866\\
9&-24.585955&-31.035082&-28.313824\\
10&-28.118343&-35.119017&-31.872599\\
20&-64.649712&-75.532157&-68.531759\\
30&-102.253573&-115.682270&-106.182727\\
40&-140.359594&-155.759552&-144.313504\\
50&-178.767074&-195.806619&-182.736279\\
60&-217.379108&-235.838285&-221.358695\\
70&-256.140634&-275.861043&-260.127718\\
80&-295.017025&-315.878188&-299.009766\\
90&-333.984925&-355.891567&-337.982076\\
100&-373.027751&-395.902299&-377.028430\\ \hline
\end{array}
\]
\caption{\label{Sz}Approximative values for the smallest zeros $\tilde{\alpha }_{n}$ of $P_{n}^{\func{id}}\left( z\right) /z$, compared with (\cite{Sz75}, (6.32.6)).}
\end{table}

Further, Theorem \ref{rechts}
implies that
$\frac{\beta_n}{\tilde{\beta }_{n}}<1$. Nevertheless, Table \ref{alphabeta}
indicates that 
$\frac{\tilde{\beta}_n}{(n-1) \, \beta_n}$ converges against $\approx 0.37\ldots$.
If this is the case, it would be interesting to identify this constant in the context of orthogonal polynomials. Thus, we
raise the question, if the following limit exists:
 \begin{equation*}
 \lim_{n \rightarrow \infty}
 \frac{\tilde{\beta}_n}{(n-1) \, \beta_n} = \beta .
\end{equation*}

\subsection{D'Arcais Polynomials}
Real zeros are related to the sign changes of the sequences 
$\{ P_n^{g,h}(z_0)\}_n $ for fixed $z_0 \in \mathbb{R}_{<0}$.

Note, that the non-trivial real zeros are negative, since the coefficients of 
$P_n^{g,h}(z)$ are non-negative real numbers. The real zeros dictate the sign changes of
$\{P_n^{g,h}(z_0)\}_n$ for fixed $z_0 \in \mathbb{R}_{<0}$. We also refer to \cite{HN20b}, where the sign changes of the Ramanujan $\tau$-function
are analyzed.

Kostant \cite{Ko04}, building on representation theory of complex Lie algebras 
and Han \cite{Ha10} on the Nekrasov--Okounkov hook length formula, 
proved that $(-1)^n \, P_n^{\sigma}(z)> 0$ for $ z \leq -n^2 +1 $ for $n \geq 4$. 
Utilizing the recursion formula (\ref{recursion}) of $P_n^{g,h}(z)$
and assuming that $\sum_{n=1}^{\infty} g(n) \, q^n$ is regular at $q=0$ and $h$ monotonously increasing, 
there exists a $\kappa >0$, only dependent
on $g$, such that
$P_n^{g,h}(z) \neq 0$ for all complex $z$, such that $\vert z \vert > \kappa\, h(n-1)$.
For example, we have shown \cite{HN21} that the D'Arcais polynomials $P_n(z)$ are non-vanishing for all complex $z$ with $\vert z \vert > 10.\overline{81}
\, (n-1)$.

In \cite{HNT20}, we obtained the following numerical result.
Let $\alpha_n$ be the smallest real zero of $Q_n^{\sigma}(z)$. Then the smallest real zero
$\tilde{\alpha}_n$ of $P_n^{\sigma}(z)$ satisfies $ \alpha_n < \frac{\tilde{\alpha}_n}{n-1}$ 
for $n \leq 1400$
(we refer to Figure \ref{fig2}, \cite{HNT20}).

Let $\kappa_n$ 
be the maximum of $\{ \vert \alpha_1\vert, \ldots, \vert \alpha_n \vert \}$ 
and $\tilde{\kappa}_n$ be the maximum of $\{ \vert \tilde{\alpha}_1\vert, \ldots, \vert \tilde{\alpha}_n \vert \}$, where $\tilde{\alpha}_k$ is the smallest zero of
$P_k^{\sigma}(x)$. Then we prove in this paper applying Theorem \ref{links} that
\begin{equation*}
\frac{\tilde{\kappa }_{n}}{\kappa_n \,(n-1)} < 1.
\end{equation*}
Let $n \leq 1400$, then we have checked that $\alpha_n > \alpha_{n+1}$ and
$\tilde{\alpha}_n > \tilde{\alpha}_{n+1}$. This implies the results displayed in Figure \ref{fig2}, \cite{HNT20}.

\begin{figure}[ht]
  \centering
  \caption{Minimal real zeros of
  $Q_n(x)$ and $P_n^{\sigma} \left( (n-1)x \right)$.}
  \label{fig2}
  \includegraphics[width=0.6\textwidth]{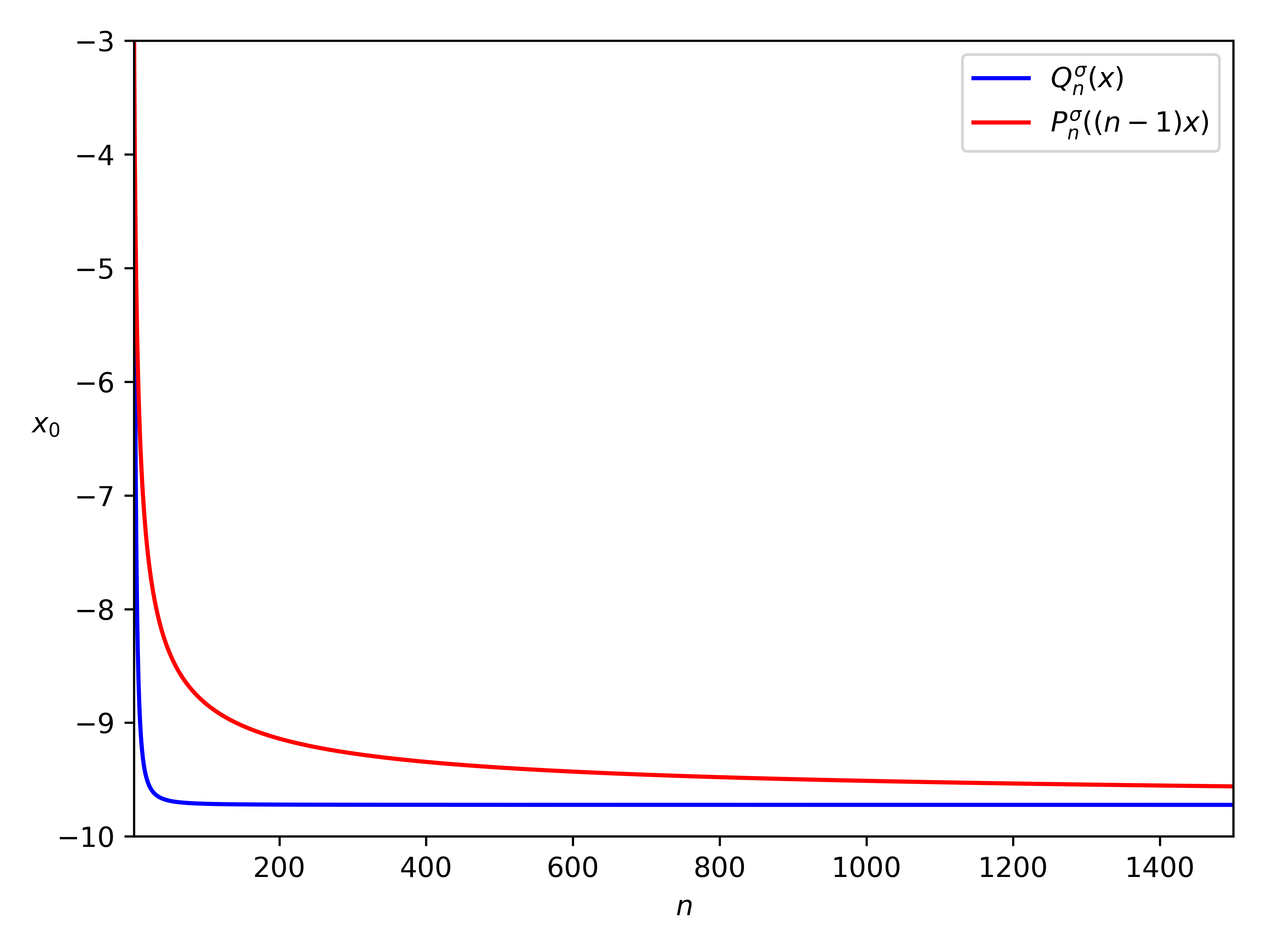}
\end{figure}
Table \ref{observation} illustrates that
$\left\vert \alpha _{n}\left( n-1\right) \right\vert $ is an upper bound for $\left\vert \tilde{\alpha }_{n}\right\vert $
and that the quotient seems to converge to $1$. Let $\beta_n$ be the largest
non-trivial real zero of $Q_n^{\sigma}(x)$ and
$\tilde{\beta }_{n}$ be the largest non-trivial zero of $P_n^{\sigma}(x)$, the D'Arcais polynomial of degree $n$. Let $n$ be a prime, then we observed for $n \leq 257$ that
\begin{equation*}
\frac{\tilde{\beta_n}}{(n-1) \beta_n} <1.
\end{equation*}
For general $n$, this is not always the case. For example, for $n=18$ we obtain
$\frac{\tilde{\beta }_{n}}{\left( n-1\right) \beta _{n}}\approx 1.878282$.
Moreover, it seems that the quotient for
a prime number $n$ converges against a positive real number, if $n$ goes to infinity. This indicates that the D'Arcais polynomials with prime number degree has some special properties among all D'Arcais polynomials.

\begin{table}
\[
\begin{array}{rccc|c}
\hline
n&\alpha _{n}&\beta _{n}&\frac{\tilde{\alpha }_{n}}{\left( n-1\right) \alpha _{n}}&\frac{\tilde{\beta }_{n}}{\left( n-1\right) \beta _{n}}\\ \hline
2&-3.000000&-3.000000&1.000000&1.000000\\
3&-5.236068&-0.763932&0.763932&0.654508\\
5&-7.418833&-0.194397&0.694579&0.499140\\
7&-8.352996&-0.087008&0.697784&0.444219\\
11&-9.087471&-0.031512&0.728503&0.410896\\
13&-9.251318&-0.021917&0.743585&0.392685\\
17&-9.434121&-0.012353&0.769357&0.383976\\
19&-9.488052&-0.009767&0.780202&0.381648\\
23&-9.558851&-0.006544&0.798626&0.378863\\
47&-9.681142&-0.001500&0.860549&0.363084\\
149&-9.718238&-0.000145&0.928608&0.352456\\
257&-9.721056&-0.000049&0.948860&0.350195\\
\hline
\end{array}
\]
\caption{\label{observation}
Approximative values for the smallest $\alpha _{n}$ and the largest real zeros 
$\beta _{n}$ of $Q_{n}^{\sigma }\left( z\right) /z$, compared to the 
smallest $\tilde{\alpha }_{n}$ and the largest real zeros 
$\tilde{\beta }_{n}$ of $P_{n}^{\sigma }\left( z\right) /z$.}
\end{table}

Figure \ref{QP} compares the locations of the largest real roots of $Q_n^{\sigma}(x)$ and $P_n^{\sigma}(x)$, where $n$ is a prime number. Real zeros have a blue color and non-real zeros (their real part) are plotted in red.

\begin{figure}[H]
\includegraphics[width=.7\textwidth]{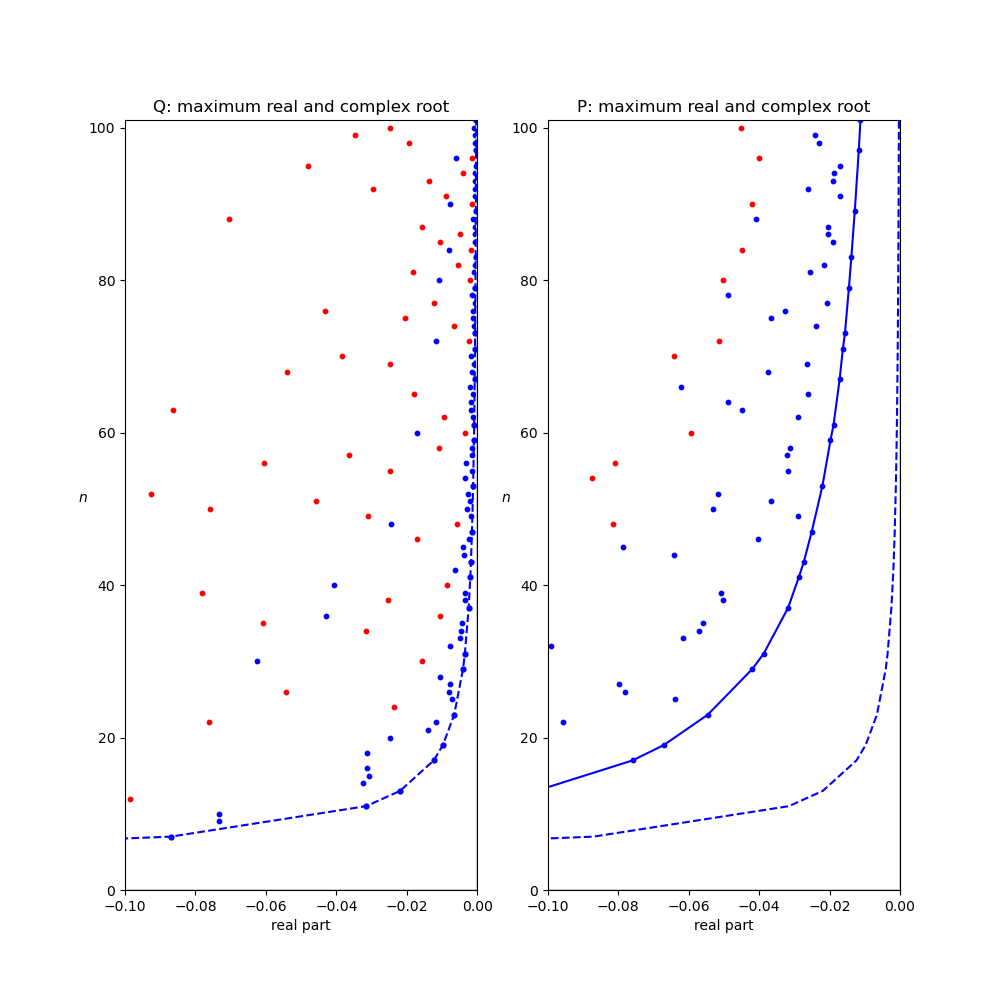}
\caption{\label{QP} Real part of the smallest non-trival zeros of $Q_{n}^{\sigma} (z) $  (left plot), 
and $P_n^{\sigma} (x)$ (right plot), where the dashed line denotes the convex
hull of the smallest real zero of $Q_n^{\sigma}(z)$
for $1\leq n\leq 100$, where $n$ is prime. }
\end{figure}

\subsection{Main Results}
We describe the impact of the smallest and the
largest real zeros of $\{Q_n^g(z)\}_n$ 
on the location of the real zeros of $\{P_n^{g,h}(z)\}_n$. Let $H(n):= \max \{0,h(k)\, : \, 1 \leq k \leq n\}$ and $H(0):=0$.
\begin{theorem}
\label{links}
Let $g$ and $h$ be normalized arithmetic functions
with $g,h: \mathbb{N} \longrightarrow \mathbb{R}_{\geq 0}$. 
Let us fix $n \geq 1$ and suppose there exists $\kappa_n > 0$, such that
$(-1)^m \, Q_m^g(x) >0$ for all real $x < - \kappa_n$ and $1 \leq m \leq n$. 
Let
\begin{equation}
y
\leq x \, H(n-1) \label{xH}
\end{equation}
then we have for $1 \leq m \leq n$ the inequalities
\begin{equation} \label{inequality}
(-1)^m P_m^{g,h}(y)  \geq  (-1)^m \frac{y}{x \, h\left(
m\right) } \, Q_m^{g}(x) >0.
\end{equation}
Let already  $y < \ - \kappa_n \, H(n-1)$ instead of (\ref{xH}), then we have for
$1 \leq m \leq n$: 
\begin{equation}\label{P}
(-1)^m P_m^{g,h}(y)   >0.
\end{equation}
\end{theorem}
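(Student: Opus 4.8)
The plan is to prove \eqref{inequality} by induction on $m$, exploiting the recursion \eqref{recursion} for $P_m^{g,h}$ together with the recursion for $Q_m^g = P_m^{g,h_0}$. Fix $n$ and the constant $\kappa_n$ as in the hypothesis, and fix $x < -\kappa_n$. By assumption $(-1)^m Q_m^g(x) > 0$ for $1 \le m \le n$. The base case $m = 0$ is the identity $P_0^{g,h}(y) = 1 = \frac{y}{x}\cdot\frac{x}{y}\cdot 1$, but since the claimed inequality \eqref{inequality} is only asserted for $m \ge 1$ I would instead take $m = 0$ as the anchor with the bookkeeping convention $P_0^{g,h} = Q_0^g = 1$ and $H(0) = 0$, $h(0)$ irrelevant, and then run the inductive step starting at $m = 1$.

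For the inductive step, assume \eqref{inequality} holds for all indices $0, 1, \dots, m-1$ (reading $m=0$ as the trivial identity). Using \eqref{recursion},
\[
(-1)^m P_m^{g,h}(y) = \frac{(-1)^m\, y}{h(m)} \sum_{k=1}^{m} g(k)\, P_{m-k}^{g,h}(y)
= \frac{|y|}{h(m)} \sum_{k=1}^{m} g(k)\, (-1)^{m-k} P_{m-k}^{g,h}(y),
\]
where I have used that $y \le x H(n-1) < 0$ (as $x < -\kappa_n < 0$ and $H(n-1) \ge 0$; the edge case $H(n-1) = 0$ forces $y \le 0$ and needs a separate trivial check) so $(-1)^m y = (-1)^{m-1}\lvert y\rvert \cdot(-1) = \dots$ — more carefully, write $(-1)^m y = -(-1)^{m}\lvert y\rvert$ when $y<0$, so $(-1)^m P_m^{g,h}(y) = \frac{-\lvert y\rvert}{h(m)}\sum g(k)(-1)^m P_{m-k}^{g,h}(y) = \frac{\lvert y\rvert}{h(m)}\sum g(k)(-1)^{m-k}P_{m-k}^{g,h}(y)$ since $(-1)^m = -(-1)^{m-k}(-1)^{k-1}$... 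I will need to track the sign parity of $k$ here; the clean way is: for odd $k$, $(-1)^m = (-1)^{m-k}$ and for even $k$ it is $-(-1)^{m-k}$, which is exactly the parity structure that makes $Q$ appear. Applying the inductive hypothesis $(-1)^{m-k} P_{m-k}^{g,h}(y) \ge (-1)^{m-k}\frac{y}{x\,h(m-k)} Q_{m-k}^g(x) = \frac{|y|}{|x| h(m-k)}(-1)^{m-k}Q_{m-k}^g(x)$ to each term (valid since $g(k) \ge 0$ and $m - k \le m - 1$), and then bounding $h(m-k) \le H(m-1) \le H(n-1)$ so that $\frac{1}{h(m-k)} \ge \frac{1}{H(n-1)} \ge \frac{|y|}{|x| h(m)} \cdot \frac{h(m)}{|y|/|x|}$... the point is we want to replace $h(m-k)$ by $H(n-1)$ and then use $|y| \le |x| H(n-1)$ to absorb it. Collecting, $\sum_{k=1}^m g(k)(-1)^{m-k}P_{m-k}^{g,h}(y) \ge \frac{|y|}{|x|\,H(n-1)}\sum_{k=1}^m g(k)(-1)^{m-k}Q_{m-k}^g(x)$, and the sum on the right is $(-1)^m Q_m^g(x)$ by the $Q$-recursion $Q_m^g(x) = x\sum_{k=1}^m g(k)Q_{m-k}^g(x)$ (divide by $x < 0$, track signs). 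Finally $\frac{|y|}{h(m)}\cdot\frac{|y|}{|x| H(n-1)} \ge \frac{|y|}{|x| h(m)}$ because $|y| \ge$... no: we need $\frac{|y|}{|x|H(n-1)} \le 1$, i.e.\ $|y| \le |x| H(n-1)$, which is exactly \eqref{xH} — so actually the correct chain keeps one factor and discards via $|y|/(|x|H(n-1)) \le 1$ only where it helps; I will arrange the algebra so the final output is $(-1)^m P_m^{g,h}(y) \ge \frac{|y|}{|x| h(m)}(-1)^m Q_m^g(x) = (-1)^m \frac{y}{x h(m)} Q_m^g(x) > 0$, the last inequality from $(-1)^m Q_m^g(x) > 0$ and $y, x$ both negative.

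The main obstacle is getting the inequality direction right when replacing the various $h(m-k)$ by $H(n-1)$: one must check that every substitution weakens the lower bound in the correct direction, and in particular that $g(k) \ge 0$ is what licenses termwise comparison inside the sum. A secondary subtlety is the sign/parity bookkeeping converting $(-1)^m y \sum g(k) P_{m-k}^{g,h}(y)$ into a sum of manifestly positive terms; this works precisely because $(-1)^m Q_m^g > 0$ on $(-\infty, -\kappa_n)$ and $y/x > 0$. For the final statement \eqref{P}: given $y < -\kappa_n H(n-1)$, pick $x$ with $y = x H(n-1)$ if $H(n-1) > 0$ (so $x < -\kappa_n$ and \eqref{xH} holds with equality), apply \eqref{inequality} to get $(-1)^m P_m^{g,h}(y) \ge (-1)^m \frac{y}{x h(m)} Q_m^g(x) > 0$; if $H(n-1) = 0$ then $h(k) = 0$ for all $1 \le k \le n-1$, which by \eqref{recursion} forces $P_m^{g,h} \equiv P_0 = 1 > 0$ for those $m$ (a degenerate case worth a one-line remark), and for $m$ with $h(m) > 0$ one argues directly. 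I expect the write-up to be short once the parity/sign normalization is fixed at the outset by consistently writing $y = -|y|$, $x = -|x|$.
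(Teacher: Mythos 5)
There is a genuine gap at the heart of your inductive step, and it is not a matter of ``arranging the algebra.'' Writing $y=-|y|$, the recursion \eqref{recursion} gives
\begin{equation*}
(-1)^m P_m^{g,h}(y)=\frac{|y|}{h(m)}\sum_{k=1}^{m}g(k)\,(-1)^{k+1}\,(-1)^{m-k}P_{m-k}^{g,h}(y),
\end{equation*}
so the coefficient multiplying $(-1)^{m-k}P_{m-k}^{g,h}(y)$ is $+g(k)|y|/h(m)$ for odd $k$ but $-g(k)|y|/h(m)$ for even $k$: the terms of the sum alternate in sign. Your induction hypothesis supplies only a \emph{lower} bound on each $(-1)^{m-k}P_{m-k}^{g,h}(y)$, which can be applied termwise only to the terms carried by a nonnegative coefficient; for even $k$ you would need an \emph{upper} bound, which the induction does not provide. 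The same defect breaks your ``collecting'' step: the sign $(-1)^{m-k}$ cannot be pulled out of $\sum_{k}g(k)(-1)^{m-k}Q_{m-k}^g(x)$, so that sum is not a multiple of $Q_m^g(x)/x$. You flag this parity issue yourself (``I will need to track the sign parity of $k$ here'') but never resolve it, and it cannot be resolved within a direct induction on \eqref{recursion} alone.

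The paper's proof sidesteps exactly this obstruction via the Fundamental Transfer Lemma \ref{uebertragung}: identity \eqref{ineq:uebertragung} expresses $\frac{xh(n)}{y}P_n^{g,h}(y)-Q_n^g(x)$ as a convolution $\sum_{k=1}^{n-1}\bigl(1-\frac{xh(k)}{y}\bigr)P_k^{g,h}(y)\,Q_{n-k}^g(x)$, in which each summand, after multiplication by $(-1)^n=(-1)^k(-1)^{n-k}$, is a product of three nonnegative factors: $1-\frac{xh(k)}{y}\ge0$ follows from $y\le xH(n-1)\le xh(k)$, while $(-1)^kP_k^{g,h}(y)>0$ and $(-1)^{n-k}Q_{n-k}^g(x)>0$ come from the induction hypothesis and the assumption on $Q$. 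Pairing $P_k$ with $Q_{n-k}$ is what makes all the signs line up; pairing $P_{m-k}$ with the sign-free coefficient $g(k)$, as you do, is what makes them clash. Your reduction of \eqref{P} to \eqref{inequality} (choosing $x=y/H(n-1)$ when $H(n-1)>0$) is fine, but it rests on the part of the argument that does not go through.
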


Further, we obtain a result on the influence of the largest non-trivial zeros of 
$Q_m^g(z)$ for $1 \leq m \leq n$ on the zeros close to $0$ of $P_n^{g,h}(z)$.
\begin{theorem}
\label{rechts}
Let $g$ and $h$ be normalized positive valued arithmetic functions, 
$h(n) \geq 1$ for all $n \in \mathbb{N}$.
Let
$n \geq 1$ be fixed.
Suppose there exists a $\mu_n <0$, such that for all $ \mu_n < x <0$: $Q_m^g(x) < 0$ for $ 1 \leq m \leq n$. Then $P_m^{g,h}(x) < 0$ for all $1 \leq m \leq n$.
\end{theorem}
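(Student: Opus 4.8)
The plan is to run a straightforward induction on $m$, exactly parallel in spirit to the proof of Theorem~\ref{links} but using the sign hypothesis near $0$ instead of near $-\infty$. Fix $x$ with $\mu_n < x < 0$. The base case $m=1$ is immediate from the recursion \eqref{recursion}: $P_1^{g,h}(x) = \frac{x}{h(1)} g(1) = \frac{x}{h(1)}$, which is negative since $x<0$, $h(1)=1>0$, and $g(1)=1$; note this also equals $\frac{x}{1\cdot h(1)} Q_1^g(x)$ up to the appropriate normalization, so the "$P$ has the same sign as $Q$" heuristic holds at the first step.

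For the inductive step, assume $P_\ell^{g,h}(x) < 0$ for all $1 \le \ell \le m-1$ (with $m \le n$), and also recall that by hypothesis $Q_\ell^g(x) < 0$ for $1 \le \ell \le n$, hence in particular for $\ell \le m$. Write out
\[
P_m^{g,h}(x) = \frac{x}{h(m)} \sum_{k=1}^m g(k)\, P_{m-k}^{g,h}(x)
= \frac{x}{h(m)}\Bigl( g(m)\, P_0^{g,h}(x) + \sum_{k=1}^{m-1} g(k)\, P_{m-k}^{g,h}(x) \Bigr).
\]
The first term in the parenthesis is $g(m) \cdot 1 = g(m) > 0$ (here we use that $g$ is positive valued in this theorem), and every summand $g(k) P_{m-k}^{g,h}(x)$ with $1 \le k \le m-1$ has $m-k$ in the range $1,\dots,m-1$, so by the inductive hypothesis each such $P_{m-k}^{g,h}(x)$ is negative, while $g(k)>0$. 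Thus the bracketed sum is $g(m) + (\text{something} \le 0)$, and one must check its sign. This is the one place that requires care, and it is the main obstacle: a priori the sum of the negative terms could dominate $g(m)$ and flip the sign of the bracket, which would then flip the sign of $P_m^{g,h}(x)$ back to positive. Resolving this is precisely where the $Q$-polynomials enter: one compares the bracketed sum for $P^{g,h}$ against the analogous bracketed expression defining $Q_m^g(x) = P_m^{g,h_0}(x)$, i.e. the case $h\equiv 1$, using the assumption $h(n)\ge 1$ to control the ratios $h(m-k)/h(m)$, and leverages $Q_m^g(x)<0$ together with $h\ge 1$ to conclude $\sum_{k=1}^m g(k) P_{m-k}^{g,h}(x) > 0$, whence $P_m^{g,h}(x) = \frac{x}{h(m)}\cdot(\text{positive}) < 0$ since $x<0$.

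Concretely, I expect the comparison to proceed by an auxiliary induction establishing a termwise or telescoping inequality of the form $(-1)^\ell P_\ell^{g,h}(x) \ge \frac{(-1)^\ell}{h(\ell)} Q_\ell^g(x)$ —- the mirror image of \eqref{inequality} —- valid for $x$ in the interval $(\mu_n,0)$ where all the $Q_\ell^g$ are negative; since both sides are then negative and $P$ dominates $Q$ in absolute value after dividing by $h(\ell)\ge 1$, the strict negativity of $Q_\ell^g(x)$ forces $P_\ell^{g,h}(x)<0$. The only genuinely new input beyond the machinery already deployed for Theorem~\ref{links} is bookkeeping the inequality $h(n)\ge 1$ correctly when it multiplies or divides the recursion, and verifying that the sign bootstrap does not require $h$ to be monotone (it should not, only $h\ge 1$ is used). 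I would therefore present the argument as a short induction citing the estimate from the proof of Theorem~\ref{links} with the roles of "large $x$" and "small $x$" interchanged, and flag the positivity of $g$ (as opposed to mere non-negativity in Theorem~\ref{links}) as the hypothesis that guarantees the $g(m)$ term is strictly positive and hence that the induction closes.
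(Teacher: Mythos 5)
Your induction skeleton and base case are fine, and you correctly locate the crux: in the recursion the bracket $g(m)+\sum_{k=1}^{m-1}g(k)P_{m-k}^{g,h}(x)$ mixes a positive term with negative ones, so its sign is not determined by the inductive hypothesis alone. But you do not actually close this gap — you only announce that "one compares" $P$ with $Q$ and that you "expect" an auxiliary inequality to do the job. That expected inequality, $(-1)^\ell P_\ell^{g,h}(x)\ge \frac{(-1)^\ell}{h(\ell)}Q_\ell^g(x)$, is moreover mis-shaped for this regime: near $0$ the hypothesis is that \emph{all} the $Q_\ell^g(x)$ are negative (no alternation in $\ell$), so importing the alternating-sign pattern from Theorem~\ref{links} is the wrong mirror image, and "interchanging the roles of large and small $x$" in that proof is not a step one can cite — the two regimes use the transfer identity in genuinely different ways.

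The missing idea is to use identity (\ref{Vorzeichen}) directly, with \emph{two} points. The paper fixes $\mu_n<x<y<0$; then $\frac{x\,h(k)}{y}=\frac{|x|}{|y|}h(k)>1$ because $|x|>|y|$ and $h(k)\ge 1$, so each factor $1-\frac{x\,h(k)}{y}$ is negative, while $P_k^{g,h}(y)<0$ (induction hypothesis) and $Q_{n-k}^g(x)<0$ (hypothesis of the theorem). Every summand on the right of (\ref{Vorzeichen}) is therefore a product of three negative numbers, hence negative, so
\[
\frac{x\,h(n)}{y}\,P_n^{g,h}(y)\;\le\;Q_n^g(x)\;<\;0,
\]
and since $\frac{x\,h(n)}{y}>0$ this forces $P_n^{g,h}(y)<0$. (A one-point variant, setting $x=y$ in (\ref{Vorzeichen}) so that the coefficients become $1-h(k)\le 0$, would also work.) Without writing down this identity and the three-negative-factors sign count, the argument does not go through; as submitted, the decisive step of your proof is a conjecture rather than a deduction.
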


These results follow from an identity, which we consider as fundamental in studying
properties of $P_n^{g,h}\left( z\right) $.

\begin{lemma}[Fundamental transfer lemma]
\label{uebertragung} Let $g$ and $h$ be normalized positive valued arithmetic functions.
Let
$x,y\in \mathbb{C}\setminus \left\{ 0\right\} $. Then we have for $n \geq 1$:
\begin{equation}
\frac{h\left( n\right) }{y} \, P_{n}^{g,h}\left( y\right) -\frac{1}{x} \,Q_{n}^{g}\left( x\right) 
=
\sum _{k=1}^{n-1}\left( \frac{1}{x}-\frac{h\left( k\right) }{y}\right) P_{k}^{g,h}\left( y\right) Q_{n-k}^{g}\left( x\right)
.\label{ineq:uebertragung}
\end{equation}
\end{lemma}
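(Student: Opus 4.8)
The plan is to prove the identity by induction on $n$, using only the two defining recursions. Recall that $P_n^{g,h}(y) = \frac{y}{h(n)}\sum_{k=1}^n g(k)\,P_{n-k}^{g,h}(y)$ and $Q_n^g(x) = x\sum_{k=1}^n g(k)\,Q_{n-k}^g(x)$, with $P_0^{g,h}=Q_0^g=1$. The base case $n=1$ is a direct check: the left-hand side is $\frac{h(1)}{y}\cdot\frac{y}{h(1)}g(1) - \frac1x\cdot x\,g(1) = g(1)-g(1)=0$, and the right-hand side is an empty sum, hence $0$.

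For the inductive step, I would start from the left-hand side of \eqref{ineq:uebertragung} and substitute both recursions to turn $\frac{h(n)}{y}P_n^{g,h}(y)$ into $\sum_{k=1}^n g(k)P_{n-k}^{g,h}(y)$ and $\frac1x Q_n^g(x)$ into $\sum_{k=1}^n g(k)Q_{n-k}^g(x)$. The natural strategy is then to prove a slightly stronger ``convolution'' form: rather than induct on the stated identity directly, it is cleaner to verify the equivalent statement
\[
\frac{h(n)}{y}P_n^{g,h}(y) = \sum_{k=0}^{n-1}\frac{h(k)}{y}\,P_k^{g,h}(y)\,Q_{n-k}^g(x)\Big/x^{0}\;-\;\text{(correction terms)},
\]
but in practice the slickest route is to compare the two double sums obtained by expanding $P_k^{g,h}(y)$ (on the right-hand side of \eqref{ineq:uebertragung}, via its recursion) against the single sum $\sum_{k=1}^n g(k)P_{n-k}^{g,h}(y)$, and likewise to re-expand the $Q$-factors. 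Concretely: write $P_k^{g,h}(y) = \frac{y}{h(k)}\sum_{j=1}^k g(j)P_{k-j}^{g,h}(y)$ inside the sum on the right, so the term $\frac{h(k)}{y}P_k^{g,h}(y)$ collapses to $\sum_{j=1}^k g(j)P_{k-j}^{g,h}(y)$; the $\frac1x$-part of the coefficient pairs with $Q_{n-k}^g(x)$ and, after summing over $k$ and reindexing via $m=n-k$, telescopes against $\frac1x Q_n^g(x) = \sum_{k=1}^n g(k)Q_{n-k}^g(x)$ using the inductive hypothesis for smaller $n$.

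The one genuinely delicate point — and the step I expect to be the main obstacle — is the bookkeeping of the double sum: one has mixed terms $g(j)\,P_{k-j}^{g,h}(y)\,Q_{n-k}^g(x)$ indexed over $1\le j\le k\le n-1$, and one must recognize, after swapping the order of summation, that the inner sum over the allowed range of one index is itself an instance of \eqref{ineq:uebertragung} at a smaller value of $n$ (to which the inductive hypothesis applies), while the leftover boundary terms are exactly $\sum_{k=1}^n g(k)P_{n-k}^{g,h}(y) - \sum_{k=1}^n g(k)Q_{n-k}^g(x)$. Keeping the two families of coefficients $\frac1x$ and $\frac{h(k)}{y}$ separate throughout, and being careful that the $k=0$ and $k=n$ endpoints are handled by the conventions $P_0^{g,h}=Q_0^g=1$ and by the definitions rather than by the recursions, is what makes the argument go through. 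Since every manipulation is a finite rearrangement of polynomials in $x^{-1}$ and $y^{-1}$ with no analytic subtlety, once the index juggling is organized correctly the identity follows; the hypotheses $x,y\neq 0$ and $h>0$ are used only to guarantee that the factors $\frac1x$, $\frac{h(k)}{y}$ make sense.
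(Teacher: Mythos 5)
Your overall strategy (expand both recursions, swap the double sum, reindex) does lead to a proof, but as written there is a genuine gap: the step you yourself flag as ``the main obstacle'' --- the bookkeeping of the double sum --- is the entire content of the lemma, and you neither carry it out nor identify the correct mechanism that closes it. In particular, your claim that after swapping the order of summation the inner sum ``is itself an instance of \eqref{ineq:uebertragung} at a smaller value of $n$'' is not what happens: the inner sum is an instance of the \emph{recursion for $Q$}, not of the transfer identity, and no induction is needed at all. (The intermediate displayed ``convolution form'' with the $x^0$ and unspecified correction terms is also not a meaningful statement.) To close your argument for $n\ge 2$: expanding $\frac{h(k)}{y}P_k^{g,h}(y)=\sum_{j=1}^{k}g(j)P_{k-j}^{g,h}(y)$ on the right-hand side and substituting $i=k-j$ gives
\[
\sum_{k=1}^{n-1}\frac{h(k)}{y}P_k^{g,h}(y)\,Q_{n-k}^{g}(x)
=\sum_{i=0}^{n-2}P_i^{g,h}(y)\sum_{j=1}^{n-1-i}g(j)\,Q_{n-i-j}^{g}(x)
=\sum_{i=0}^{n-2}P_i^{g,h}(y)\left(\tfrac{1}{x}\,Q_{n-i}^{g}(x)-g(n-i)\right),
\]
where the last equality uses $\frac{1}{x}Q_{m}^{g}(x)=\sum_{j=1}^{m}g(j)Q_{m-j}^{g}(x)$ with the $j=m$ term split off. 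Subtracting this from $\frac{1}{x}\sum_{k=1}^{n-1}P_k^{g,h}(y)Q_{n-k}^{g}(x)$ leaves the boundary terms $\frac{1}{x}P_{n-1}^{g,h}(y)Q_1^{g}(x)-\frac{1}{x}Q_n^{g}(x)=g(1)P_{n-1}^{g,h}(y)-\frac{1}{x}Q_n^{g}(x)$, while $\sum_{i=0}^{n-2}g(n-i)P_i^{g,h}(y)=\frac{h(n)}{y}P_n^{g,h}(y)-g(1)P_{n-1}^{g,h}(y)$; everything cancels to the left-hand side. So the identity is a direct finite rearrangement, with no inductive hypothesis invoked anywhere.

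For comparison, the paper proves the lemma with generating functions: the $P$-recursion shows that $\bigl(1-x\sum_{n\ge1}g(n)q^n\bigr)\sum_{n\ge0}P_n^{g,h}(y)q^n=\sum_{n\ge0}\bigl(1-\frac{xh(n)}{y}\bigr)P_n^{g,h}(y)q^n$ (with $h(0)=0$), the $Q$-recursion identifies $\sum_{n\ge0}Q_n^{g}(x)q^n$ as the reciprocal of the first factor, and comparing coefficients of the product yields $P_n^{g,h}(y)=\sum_{k=0}^{n}\bigl(1-\frac{xh(k)}{y}\bigr)P_k^{g,h}(y)Q_{n-k}^{g}(x)$, from which \eqref{ineq:uebertragung} follows by removing the $k=0$ and $k=n$ terms. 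Your coefficient-level route is essentially the same computation without the power-series packaging (the paper itself notes that all sums may be truncated, so no regularity is required); the defect in your write-up is only that the decisive cancellation is asserted rather than performed, and attributed to the wrong mechanism.
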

The expression on the left hand
side of (\ref{ineq:uebertragung}) is equal to
\begin{equation*}
\sum_{k=1}^{n-1} g(k) \, \left( P_{n-k}^{g,h}(y) - Q_{n-k}^{g}\left( x\right) \right) .
\end{equation*}

To prove Theorem \ref{links} and Theorem \ref{rechts},
we examine the identity
\begin{equation}
\frac{ x h\left( n\right) }{y} \, P_{n}^{g,h}\left( y\right) -Q_{n}^{g}\left( x\right) 
=
\sum _{k=1}^{n-1}\left( 1 -\frac{  x \, h\left( k\right) }{y}\right) P_{k}^{g,h}\left( y\right) Q_{n-k}^{g}\left( x\right)
\label{Vorzeichen}
\end{equation}
for $\frac{x \, h(k)}{y} > 1$ or $\frac{x \, h(k)}{y} <1$ for $ 1 \leq k \leq n-1$.
\section{Proof of the Fundamental Transfer Lemma \ref{uebertragung}, Theorem \ref{links}, and Theorem \ref{rechts}}
We verify Theorem \ref{links} and Theorem \ref{rechts} for $n=1$ and $n=2$.
The fundamental transfer lemma is trivial for $n=1$ and reduces to the term 
$y-x$ on the left and right hand side for $n=2$,
since $h(2) P_2^{g,h}(x) = Q_2^{g,h}(x)= x ( x + g(2))$.

Let $n=1$. Let $\kappa_1, - \mu_1>0$ and $\mu_1 <0$ be any positive real numbers.
Then $y< x < - \kappa_1$ implies (\ref{inequality}). Let $y < 0$.
Then $-y>0$ (\ref{P}). The claim of Theorem~\ref{rechts} for $n=1$ is trivial.

Let $n=2$. Then any $\kappa_2 > g(2)$ works. Let $y<x < - \kappa_2$. Then we obtain
(\ref{inequality}) for $m=1$
and (\ref{inequality}) for $m=2$
since $y \geq x$. Further, let $y < - \kappa_2$. Then (\ref{P}) for $m=1$ is obvious
and (\ref{P}) follows for $m=2$, since $y < -\kappa_2 < -g(2)$.
It is sufficient and necessary to choose $\mu_2$
as $ -g(2) < \mu_2 <0$. 
Then Theorem \ref{rechts} follows.

\newpage

\subsection{Proof of the Fundamental Transfer Lemma}
\begin{proof}[Proof of Lemma~\ref{uebertragung}]
To make the proof transparent, we first assume that all involved sums are regular at $q=0$.
We have
\begin{eqnarray*}
x\sum _{k
=1}^{\infty }g\left(
k\right) q^{
k}
\sum _{m
=0}^{\infty }P_{m
}^{g,h}\left( y\right) q^{
m}
&=&\sum _{n=1}^{\infty }x\sum _{\substack{k\geq 1,m\geq 0 \\ k+m=n}}g\left( k\right) P_{m}^{g,h}\left( y\right) q^{k+m}\\
&=&\sum _{n=1}^{\infty }\frac{xh\left( n\right) }{y}\frac{y}{h\left( n\right) }\sum _{k=1}^{n}g\left( k\right) P_{n-k}^{g,h}\left( y\right) q^{n}\\
&=&
\sum _{n=1}^{\infty }\frac{xh\left( n\right) }{y}P_{n}^{g,h}\left( y\right)
q^{n}.
\end{eqnarray*}
Therefore,
\[
\left( 1-x\sum _{n=1}^{\infty }g\left( n\right) q^{n}\right) \sum _{n=0}^{\infty }P_{n}^{g,h}\left( y\right) q^{n}=\sum _{n=0}^{\infty }\left( 1-\frac{xh\left( n\right) }{y}\right) P_{n}^{g,h}\left( y\right) q^{n}
.
\]
Here, we extended the arithmetic
function $h$
by $h(0)=0$.
Since
$$\left( 1-x\sum _{n=1}^{\infty }g\left( n\right) q^{n}\right)
\sum _{n=0}^{\infty }Q_{n}^{g}\left( x\right) q^{n}=1,$$
we obtain
\begin{eqnarray*}
\sum _{n=0}^{\infty }P_{n}^{g,h}\left( y\right) q^{n}&=&\sum _{n=0}^{\infty }\left( 1-\frac{xh\left( n\right) }{y}\right) P_{n}^{g,h}\left( y\right) q^{n}
\sum _{m=0}^{\infty }Q_{m}^{g}\left( x\right) q^{m}\\
&=&\sum _{n=0}^{\infty }\sum _{k=0}^{n}\left( 1-\frac{xh\left( k\right) }{y}\right) P_{k}^{g,h}\left( y\right) Q_{n-k}^{g}\left( x\right) q^{n}
.
\end{eqnarray*}
Comparing coefficients of the last power series yields
\[
P_{n}^{g,h}\left( y\right) =\sum _{k=0}^{n}\left( 1-\frac{xh\left( k\right) }{y}\right) P_{k}^{g,h}\left( y\right) Q_{n-k}^{g}\left( x\right)
.
\]
Subtracting the terms in the last sum for $k=0$ and $k=n$ yields
(\ref{Vorzeichen})
\[
\frac{xh\left( n\right) }{y}P_{n}^{g,h}\left( y\right) -Q_{n}^{g}\left( x\right) =\sum _{k=1}^{n-1}\left( 1-\frac{xh\left( k\right) }{y}\right) P_{k}^{g,h}\left( y\right) Q_{n-k}^{g}\left( x\right)
.
\]

Let $n \geq 1$. We could truncate all involved sums to obtain (\ref{Vorzeichen}).
Therefore, the regularity of $\sum_{n=1}^{\infty} g(n) \, q^n$ at $q=0$ is not needed.
\end{proof}
\subsection{Proof of Theorem \ref{links} and Theorem \ref{rechts}}
\begin{proof}[Proof of Theorem \ref{links}.]
We prove
the
theorem by mathematical induction. The basic case $n=1$ is already proven.
Next, let $n>1$ and let the
theorem be true for all $1 \leq n_0  <n$.
Suppose $\kappa_{n_0} >0$
is given, such that for each $1 \leq m_0 \leq n_0$ and
$x < -\kappa_{n_0}$: $(-1)^{m_0} Q_{m_0}^g(x) >0$.
By induction hypothesis we have for $y < -\kappa_{n_0} \, H(n_0-1)$ and $1 \leq m_0 \leq n_0$:
\begin{equation*}
(-1)^{m_0} \, P_{m_0}^{g,h}(y)
>0.
\end{equation*}
By (\ref{ineq:uebertragung}) we obtain
\begin{eqnarray*}
&&\left( -1\right) ^{n
}\left( \frac{xh\left( n
\right) }{y}P_{n
}^{g,h}\left( y\right) -Q_{n
}^{g}\left( x\right) \right) \\
&=&\sum _{n_0=1}^{n-1}\left( 1-\frac{x h\left(
n_0\right) }{y}\right) \left( -1\right) ^{
n_0}P_{n_0
}^{g,h}\left( y\right) \left( -1\right) ^{
n-n_0
}Q_{
n-n_0
}^{g}\left( x\right)
\geq 0
\end{eqnarray*}
for $x
< -\kappa _{n}
\leq 0$ and $y
\leq xH\left(
n-1\right)
\leq xH
\left( n_0
\right)
$ for all $
n_0\leq n
-1$ as
$$H\left( n_0
\right) =\max \left\{ h\left(
n_0\right) ,H\left( n_0
-1\right) \right\} .$$
This implies
\begin{equation*}
(-1)^n \frac{x \, h(n)}{y} \, P_n^{g,h}(y)
\geq (-1)^n \, Q_n^g(x) >0,
\end{equation*}
and finally, the
theorem is proven.
\end{proof}

\begin{proof}[Proof of Theorem \ref{rechts}.]
Let $n \geq 2$ be given. Let $\frac{x \, h(k)}{y} >1$  for $1 \leq k \leq n-1$.
Then for each $1 \leq k \leq n-1$, the term  $
1 - \frac{x \, h(k)}{y}
$ is always negative. Since $h(n) \geq 1$,
this is guaranteed
since $\left\vert x \right\vert
> \left\vert y \right\vert $.
We prove the
theorem by mathematical induction. Theorem~\ref{rechts}
holds true for $n=1$ and $n=2$. Therefore, let $ n \geq 3$ and $\mu_n<0$ be given, such that for $ \mu_n < x <0$:
$Q_k^g(x) <0$ for all $1 \leq k \leq n$.
Then $P_k^{g,h}(x) < 0 $ for $1\leq k \leq n-1$ by induction hypothesis.
We examine (\ref{Vorzeichen}). Let $\mu_n <y <0$. Let $x$ be given,
such that $\mu_n < x <y <0$. Then we obtain from (\ref{Vorzeichen}) that
\begin{equation*}
\frac{x \, h(n)}{y} P_n^{g,h}(y) - Q_n^g(x) \leq 0.
\end{equation*}
Since $Q_{n}^{g}\left( x\right) <0$, the theorem is proven.
\end{proof}

\section{Final Remarks}
We begin with an example, which illustrates the refinement of previously
known results
on the zero domain of $P_{n}^{g,h}\left( x\right) $.
Then we state some identities deduced from the fundamental transfer lemma.
Finally,
we give a reformulation of Lehmer's conjecture and apply Theorem \ref{links} to Hermite polynomials $H_n(x)$.

\subsection{Example}
Let $g(n)=n$ if $n$ is odd and $g(n)=\frac{n}{2}$ if
$n$ is even. 
Let $h(n)=n$.
We deduce from \cite{HN21} that for $\kappa = 5.71$, we have
$$ P_n^{g,h} (z) \neq 0 \text{ for all } \vert z \vert > \kappa \, (n-1).$$
Numerical experiments indicate that 
$\kappa =4$ seems to be possible (Figure \ref{aa} and Table~\ref{betraege}).
We note that $Q_n^g(z)$ satisfies a $5$-term recursion.
From \cite{HNT20}, we obtain
\begin{equation*}
Q_n^g(z) = z \, Q_{n-1}^g(z) + (z+2) \, Q_{n+2}^g(z) + z \, Q_{n-3}^g(z) - Q_{n-4}^g(z), \qquad
n \geq
5
,
\end{equation*}
with initial values
$Q_1^g(z) = z$, $Q_2^g(z) = z^2 +z$,
$Q_3^g(z)= z^3 + 2 z^2 + 3 z$, and $Q_{4}^{g}\left( z\right) =z^4
 + 3\*z^3
 + 7\*z^2
 + 2\*z$.

\begin{figure}[H]
\includegraphics[width=.4\textwidth]{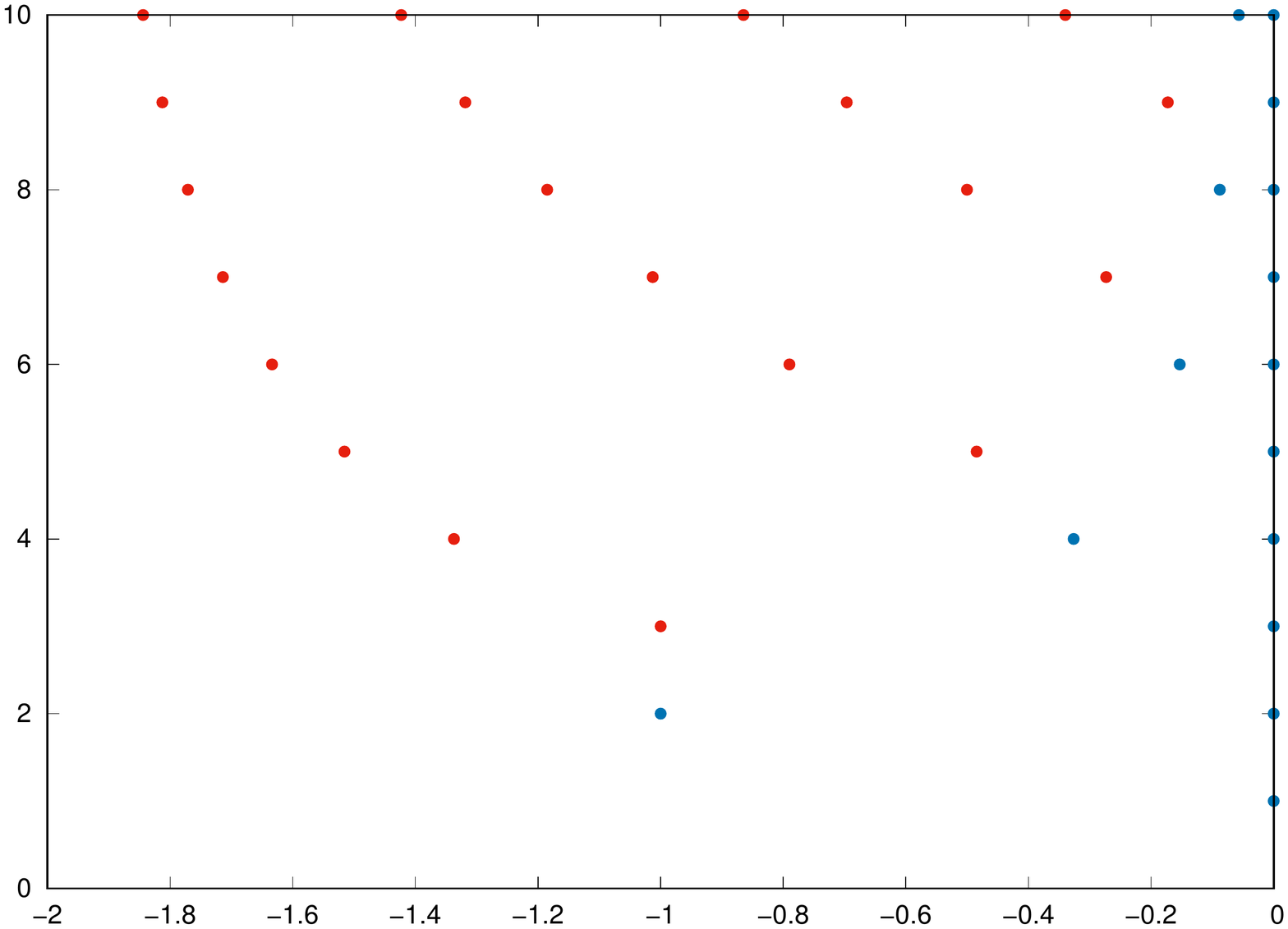}
\includegraphics[width=.4\textwidth]{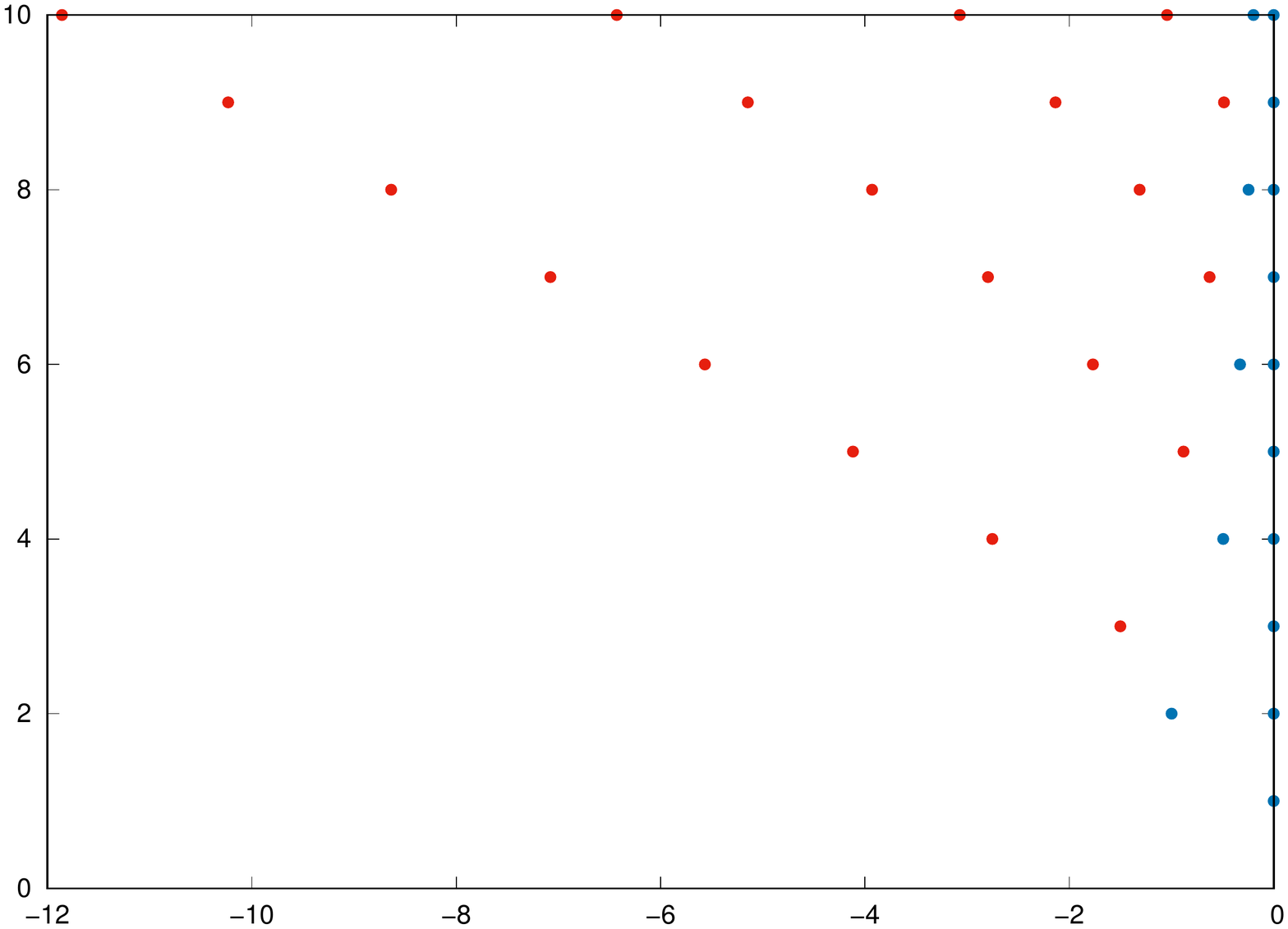}
\caption{\label{aa}Real parts of
zeros of $Q_{n}^{g} (x) $  and $P_n^g (x)$
for $1\leq n\leq 10$.
Here blue denotes that the zero is a real 
number and red denotes that it is not a real one.
}
\end{figure}

\begin{table}
\[
\begin{array}{rc|rc|}
\hline
n & \left| x\right| & n & \left| x\right| \\ \hline \hline
2 & 1.000000000 & 20 & 3.908878746 \\
3 & 1.732050808 & 30 & 3.958353612 \\
4 & 2.475342535 & 40 & 3.976255443 \\
5 & 2.910743051 & 50 & 3.984682343 \\
6 & 3.189361602 & 60 & 3.989307029 \\
7 & 3.374909553 & 70 & 3.992114849 \\
8 & 3.504161170 & 80 & 3.993946290 \\
9 & 3.597512290 & 90 & 3.995206626 \\
10 & 3.667001052 & 100 & 3.996110777 \\ \hline
\end{array}
\]
\caption{\label{betraege}Approximative largest absolut values of the zeros of $Q_{n}^{g}\left( x\right) $. }
\end{table}

In the following let $n \leq 100$. Then $
-1
$ is the smallest real zero.
Theorem \ref{links} implies that all real zeros of $P_n^{g,h}(z)$ are larger equal 
$ -(n-1)$. We also see that the largest real zeros of $\{Q_n^g(z)\}$ are monotonously increasing and tend to converge to $0$. Therefore, Theorem \ref{rechts} implies that for $\beta_n$
the largest non-trivial real zero of $\{Q_m^g(z)\}_{m \leq n}$,
the largest non-trivial real root $\tilde{\beta }_n$  of $\{P_n^g(z)\}_{m \leq n}$
satisfies $$\tilde{\beta_n} \leq \beta_n.$$

Note,
since $Q_n^g(z)/z$ and $n$ odd has no real zeros one may deduce in general that
this implies
$P_n^g(z)/z$
not to have real zeros as well. But this is
not the case in general.

{\bf Counterexample:}
Let $g(n) = \binom{\alpha \, n -1}{n-1}$ for $ \alpha \geq 1$. Then 
\begin{equation*}
P_n^g(z) = \frac{z}{n!} \prod_{k=1}^{n-1} \left( z + (\alpha -1) \, n +k \right).
\end{equation*} 
Let $\alpha= \frac{3}{2}$. Then the zeros of $Q_3^g(z)/z$ are not real, but $P_3^g(z)/z$ has obviously 
two real zeros.

\subsection{Identities}
Let $F_n$ be the Fibonacci numbers, where $F_0=0$, 
$F_1=1$, $F_2=1$, $F_3=2$, $F_4 = 3$.
It is known that $Q_n^{\func{id}}(1)= F_{2n}$ for $n \geq 1$ \cite{BHN22}. 
We obtain from the fundamental transfer lemma and special properties of $P_n^{\func{id}}(z)$ and $Q_n^{\func{id}}(z)$:
\begin{equation*}
L_{n-1}^{(1)}(y) - F_{2n} = - \sum_{k=1}^{n-1} \left( \frac{y}{k} +1 \right) L_{k-1}^{(1)}(y) \, F_{2(n-k)}.
\end{equation*}

\subsection{Lehmer's
Conjecture}
Let $\Delta \left( \omega
\right) := q \prod_{n=1}^{\infty} \left( 1 - q^n \right)^{24}$. Then the Ramanujan $\tau$-function is provided by the Fourier coefficients of $\Delta(\omega)$:
\begin{equation*}
\Delta(\omega) = \sum_{n=1}^{\infty} \tau(n) \, q^n.
\end{equation*}
Lehmer conjectured that $\tau(n)$
never vanishes. Let $n$ be the smallest natural number, such that $\tau(n) =0$. Then $n$ is a prime \cite{Le47}. Moreover, it is known that
Lehmer's conjecture is true for $n \leq 10^{23}$ \cite{DHZ14}.

There are several variations
of Lehmer's conjecture known, 
especially in the context of special values of certain polynomials 
\cite{On08}. In this spirit, we offer:
\newline
\newline
{\bf Lehmer's
Conjecture
(Polynomial Version)}\\
For all $n \geq 1$
and for every $z \in \mathbb{C}$:
\begin{equation}\label{HN Lehmer}
\sum_{k=0}^{n-1} \left( \frac{1}{z} + \frac{k}{24}\right) 
\, \tau(k+1) \, Q_{n-k}^{\sigma}(z) \neq 0.
\end{equation}
We have recorded in Table \ref{Q} the first values of $Q_{n}^{\sigma }\left(
-1\right) $, which makes it
easy to verify (\ref{HN Lehmer}) for  $z=-1$.
It would be interesting to have Lehmer's conjecture in terms of special values of $Q_n^{\sigma}(z)$, as $z=-24$ in the case of D'Arcais polynomials.

\begin{table}
\[
\begin{array}{r|rrrrrrrrrr}
\hline
n&1&2&3&4&5&6&7&8&9&10\\ \hline \hline
Q_{n}^{\sigma }\left( -1\right) &-1&-2&1&2&4&-6&-5&4&1&18  \\ \hline
n&11&12&13&14&15&16&17&18&19&20\\ \hline \hline
Q_{n}^{\sigma }\left( -1\right) &-13&-26&4&22&66&-76&-78&66&37&122\\ \hline
\end{array}
\]
\caption{ \label{Q}Values of $Q_{n}^{\sigma }\left( -1\right) $ for $1\leq n\leq 20$.}
\end{table}

\subsection{Bounds on the Zeros of Hermite Polynomials}
We denote by $H_n(x)$ the $n$th Hermite polynomials defined by (\cite{Sz75}, (5.5.4)):
\begin{equation*}
H_n(x):=n! \, \sum_{k=0}^{\floor{n/2
} }
\frac{(-1)^k}{k!} \, \frac{(2x)^{n-2k}}{(n-2k)!}.
\end{equation*}
We have $H_0(x)=1$, $H_1(x)=2x$, $H_2(x)=4x^2-2$, and $H_3(x)=8x^3-12x$.
The Hermite polynomials are orthogonal. The zeros are real, simple, and interlacing.
Theorem \ref{links} leads to the following result. We also refer to
(\cite{Sz75}, (6.326)).
\begin{corollary}
Let $n\geq 2$. Then for the zeros of $
H
_{n}\left( x\right) $
holds
\[
\left| x\right| \leq
\cos \left( \frac{\pi }{n+1}\right) \sqrt{
2n-2
}.
\]
\end{corollary}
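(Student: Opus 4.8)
The plan is to realise the Hermite polynomials $H_n$ (as normalised in the paper) as members of the family $\{P_n^{g}\}_n$, to identify the companion polynomials $Q_n^{g}$ through their zeros, and then to feed this into Theorem~\ref{links}. For the first step I would take the normalised, non-negative arithmetic function $g$ with $g(1)=g(2)=1$ and $g(k)=0$ for $k\ge 3$, keeping $h=h_1$, i.e.\ $h(n)=n$. The recursion of Definition~\ref{def} then gives $\sum_{n\ge 0}P_n^{g}(z)q^n=\exp\!\big(z(q+\tfrac12q^2)\big)$, so extracting the coefficient of $q^n$ and comparing with $e^{2xt-t^2}=\sum_n H_n(x)t^n/n!$ — equivalently with the closed formula for $H_n$ recalled above — should yield, for all $n\ge 0$,
\[
P_n^{g}\!\big(-2x^2\big)=\frac{(-1)^n}{n!}\,x^{\,n}\,H_n(x) .
\]
(As a check: $P_1^{g}(z)=z$, $P_2^{g}(z)=\tfrac12(z^2+z)$, $P_3^{g}(z)=\tfrac16(z^3+3z^2)$, which match $H_1,H_2,H_3$.) In particular, for any real zero $x_0$ of $H_n$ the number $z_0:=-2x_0^2\le 0$ is a real zero of $P_n^{g}$.

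\emph{Step 2 (the zeros of $Q_n^{g}$).} By Definition~\ref{def}, $\sum_{n\ge 0}Q_n^{g}(z)q^n=\big(1-z(q+q^2)\big)^{-1}$. Factoring $1-zq-zq^2=-z(q-q_+)(q-q_-)$ with $q_++q_-=-1$ and $q_+q_-=-1/z$, a partial-fraction expansion gives $Q_n^{g}(z)=\dfrac{q_+^{-n-1}-q_-^{-n-1}}{z(q_+-q_-)}$, so $Q_n^{g}(z)=0$ exactly when $(q_+/q_-)^{n+1}=1$; writing $q_+/q_-=e^{2\pi ik/(n+1)}$ and solving the two symmetric-function equations gives $z=-4\cos^2\!\big(\tfrac{\pi k}{n+1}\big)$ for $k=1,\dots,n$. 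Hence every real zero of $Q_m^{g}$ lies in $\big[-4\cos^2(\pi/(m+1)),0\big]$, and since $\cos^2(\pi/(m+1))$ increases with $m$, for $1\le m\le n$ all of them lie strictly to the right of $-\kappa_n$ with $\kappa_n:=4\cos^2\!\big(\tfrac{\pi}{n+1}\big)>0$ (using $n\ge 2$). As $Q_m^{g}$ is monic of degree $m$, this forces $(-1)^mQ_m^{g}(x)>0$ for all $x<-\kappa_n$ and $1\le m\le n$ — precisely the hypothesis of Theorem~\ref{links}.

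\emph{Step 3 (the transfer).} Since $h=h_1$ is increasing, $H(n-1)=\max\{0,1,\dots,n-1\}=n-1$, so Theorem~\ref{links} applied with this $\kappa_n$ yields $(-1)^nP_n^{g}(y)>0$ for all $y<-\kappa_n(n-1)$; in particular $P_n^{g}$ has no real zero below $-4\cos^2(\pi/(n+1))(n-1)$. Applying this to $z_0=-2x_0^2$ from Step~1 gives $-2x_0^2\ge -4\cos^2(\pi/(n+1))(n-1)$, i.e.\ $x_0^2\le 2(n-1)\cos^2(\pi/(n+1))$, which is exactly $|x_0|\le\cos\!\big(\tfrac{\pi}{n+1}\big)\sqrt{2n-2}$ (the root $x_0=0$ occurring for odd $n$ is within the bound trivially). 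I expect the main obstacle to be Step~1: one must find the correct member of the family and, above all, the quadratic substitution $z=-2x^2$ — forced by the parity of $H_n$, in contrast to the linear substitution used for $Q_n^{\func{id}}$ and the Chebyshev polynomials — that converts $P_n^{g}$ into $H_n$; a secondary technical point is a clean treatment of the zeros of $Q_n^{g}$ (the roots-of-unity argument above, or equivalently the remark that $z=-2-2\cos\theta$ identifies $Q_n^{g}$ with a Chebyshev polynomial of the second kind sampled at its even-indexed nodes). Once both are in place, the corollary is an immediate consequence of Theorem~\ref{links}.
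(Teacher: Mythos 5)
Your proposal is correct and follows essentially the same route as the paper: the same choice $g(1)=g(2)=1$, $g(k)=0$ for $k\ge 3$, the same quadratic substitutions identifying $P_n^{g}(-2x^2)$ with $(-x)^nH_n(x)/n!$ and $Q_n^{g}(-x^2)$ with $(-x)^nU_n(x/2)$ (you derive the latter's zeros by partial fractions rather than by quoting the Chebyshev generating function, a cosmetic difference, and your root count is slightly off on multiplicities, though the needed containment of all real zeros in $[-4\cos^2(\pi/(n+1)),0]$ is right), followed by Theorem~\ref{links} with $\kappa_n=4\cos^2(\pi/(n+1))$ and $H(n-1)=n-1$.
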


\begin{proof}
Let $g\left( 1\right) =g\left( 2\right)=1 $ and $g\left( n\right) =0$ for
$n\geq 3$. Then
$$\sum _{n=0}^{\infty }Q_{n}^{g}\left( -x^{2}\right) q^{n}=\left( 1
-2\left( x/2\right) \left( -xq\right) +\left( -xq\right) ^{2}
\right) ^{-1}=\sum _{n=0}^{\infty }U_{n}\left( x/2\right) \left( -x\right) ^{n}q^{n}.$$
Therefore,
$Q_{n}^{g}\left( -x^{2}\right) =\left( -x\right) ^{n}U_{n}\left( x/2\right) $.
Note, that $U_{m}
\left( x/2\right) =0$ implies
$\left| x\right| \leq 2\cos \left( \frac{\pi }{n+1}\right) $
for all $m\leq n$.
This leads to
$
x
^{2}\leq 4\left( \cos \left( \frac{\pi }{n+1}\right) \right) ^{2}$.

We obtain
\[
\sum _{n=0}^{\infty }P_{n}^{g}\left( -2x^{2}\right) q^{n}=\exp \left( 2x\left( -x
q\right) -
\left( -xq
\right) ^{2}
\right) =\sum _{n=0}^{\infty }
\frac{H
_{n}\left( x\right) \left( -x
\right)
^{n}}{
n!}
q^{n}.
\]
Therefore,
$P_{n}^{g}\left( -2x^{2}\right) =
\left( -x\right) ^{n}
H_{n}\left( x\right) /\left( n!\right) $.
Theorem~\ref{links}
now ensures that
$$
2x
^{2}\leq 4\left( \cos \left( \frac{\pi }{n+1}\right) \right) ^{2}\left( n-1\right) .$$
Finally,
$\left| x\right| \leq
\cos \left( \frac{\pi }{n+1}\right) \sqrt{2n-2}$.
\end{proof}

\begin{remark}
It follows from (\cite{Sz75}, (6.326)) that the largest 
zero of $H_{n}\left( x\right) $ is bounded by
$\sqrt{2n+1}-6^{-1/2}
\left( 2n+1\right) ^{-1/6}
i_{1}$
with $i_{1}$ the smallest positive zero of Airy's function. 

In our case, we have
\begin{equation*}
\cos \left( \frac{\pi }{n+1}\right) \sqrt{2n-2}=
\sqrt{2n}-\left( 2n\right) ^{-1/2}+O
\left( n^{-3/2}\right) 
\end{equation*}
with Landau's $O$ notation.
The estimate we obtain from the transfer lemma
is
slightly weaker, since the second order term has exponent $-1/2$
instead of $-1/6$.
\end{remark}

{\bf Acknowledgments.}
We thank Professor Schmeisser for
his insightful comments.

\end{document}